\newtheorem{theorem}{Theorem}
\newtheorem{lemma}[theorem]{Lemma}
\title{Generalized Werner's formula and the connection between trigonometry with target sum problems}
\author{Hayato Isa}
\date{June 2023}
\begin{document}

\maketitle
\begin{abstract}
This paper introduces the target sum function along with its characteristics. The target sum function takes a list of integers and a specific target integer as input values and expresses the number of ways to obtain the target sum by either adding or subtracting all of the integers from the given list. This function is rooted in the target sum problem which is explored in the fields of computer science and combinatorics.

This paper, following the establishment of the generalized Werner's formula, will propose a link between the target sum function and the definite integral of the product of sine and cosine functions based on the formula.
\end{abstract}
\providecommand{\keywords}[1]
{
  \small	
  \textbf{\textit{Keywords---}} #1
}
\keywords{target sum problem, Werner's formula, trigonometry}
\tableofcontents
\newpage

\section{Introduction}
\subsection{Target sum problem}
The target sum problem is a combinatorial problem that involves finding the number of ways to obtain a specific target integer by either adding or subtracting the elements from a given list of integers. The goal is to explore all possible combinations of additions and subtractions from the list to arrive at the target sum. For example, suppose we have the following list of integers: nums = [1, 2, 3] and the target integer: target = 0. 

Then, possible combinations to obtain the target sum of 5 using the numbers in the "nums" list are:
\begin{align*}
+1 + 2 - 3 = 0\\
-1 - 2 + 3 = 0\\
\end{align*}

So, in this example, there are two ways to obtain the target sum of 0.

The known most optimized way to solve this problem is by using dynamic programming and it allows us to solve the problem in pseudo-polynomial time $O(ns)$\\(cf. [1]).

In this paper, for an arbitrary sequence $X_n=\{b_i\in\mathbb{Z}\}_{i=0}^{n}$, $T(X_n)$ represents the solution of a target sum problem with a target integer $b_0$ and a list of integers [$b_1,b_2,...,b_n$]. Mathematically, $T(X_n)$ is the number of possible sign choices $\pm$ that satisfy the equation $ \pm b_1 \pm b_2 \pm \ldots \pm b_n = b_0$.
\subsection{notation and main results}
For an arbitrary sequence $X_n=\{b_i\in\mathbb{Z}\}_{i=0}^{n}$, define $T(X_n)$ as the number of possible sign choices $\pm$ that satisfy the equation $ \pm b_1 \pm b_2 \pm \ldots \pm b_n = b_0$.

Then the value of $T(X_n)$ can be computed as follows:
$$T(X_n)=\frac{2^{n}}{\pi}\int_{0}^{\pi}\prod_{i=0}^{n}\cos(b_i x)dx$$

For an arbitrary sequence $X_n=\{b_i\in\mathbb{Z}\}_{i=0}^{n}$, define $T_{e,m}(X_n),\quad -1\le m\le n$ as the number of possible sign choices $\pm$ that satisfy the equation $ \pm b_1 \pm b_2 \pm \ldots \pm b_n = b_0$ where the number of positive signs used for $b_1,b_2,...,b_m$ is even. Then, we can simply express the definite integral of the product of sin and cos by using $T(X_n),T_{e,m}(X_n)$.\\
For example, when $p+q\equiv 0\pmod{2},\quad p,q\in\mathbb{Z}$
\begin{equation*}
\int_{-q\pi}^{p\pi}\prod_{i=0}^{m}\sin(b_i x)\prod_{i=m+1}^{n}\cos(b_i x)\,dx=
\begin{cases}
0 & m\equiv 0\pmod{4}\\
\displaystyle-\frac{1}{2^n}(p+q)\bigg(2T_{e,m}(X_n)-T(X_n)\bigg) & m\equiv 1\pmod{4}\\
0 & m\equiv 2\pmod{4}\\
\displaystyle\frac{1}{2^n}(p+q)\bigg(2T_{e,m}(X_n)-T(X_n)\bigg) & m\equiv 3\pmod{4}\\
\end{cases}
\end{equation*}

\newpage
\section{Theorem and Proof}
\begin{theorem}
For all $X_n$, when we define $Y_n$ as a sequence whose 0th term and kth term are swapped, $T(X_n)=T(Y_n)$. $(0<k\le n)$
\end{theorem}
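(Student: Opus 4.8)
The plan is to reduce both $T(X_n)$ and $T(Y_n)$ to a single quantity that does not distinguish the target from the list entries. The obstruction to a direct comparison is that in $X_n$ the element $b_0$ sits on the right-hand side of the equation while $b_1,\dots,b_n$ sit on the left, whereas in $Y_n$ the roles of $b_0$ and $b_k$ are exchanged; so I first want to put $b_0$ on the same footing as the others.

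First I would homogenize. Rather than counting sign vectors $(\epsilon_1,\dots,\epsilon_n)\in\{\pm1\}^n$ with $\sum_{i=1}^n \epsilon_i b_i = b_0$, I introduce an extra sign $\epsilon_0$ for the target and count vectors $(\epsilon_0,\epsilon_1,\dots,\epsilon_n)\in\{\pm1\}^{n+1}$ satisfying $\sum_{i=0}^n \epsilon_i b_i = 0$. Call this count $N(X_n)$. The global sign flip $\epsilon\mapsto-\epsilon$ is an involution on the zero-sum vectors that toggles $\epsilon_0$, hence splits them into equal halves according to the value of $\epsilon_0$; and the half with $\epsilon_0=-1$ is exactly the set counted by $T(X_n)$, since $-b_0+\sum_{i\ge1}\epsilon_i b_i=0$ is $\sum_{i\ge1}\epsilon_i b_i=b_0$. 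Therefore $N(X_n)=2\,T(X_n)$, and the same identity holds verbatim for $Y_n$.

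The key point is then immediate: $N(X_n)=\#\{\epsilon\in\{\pm1\}^{n+1}:\sum_{i=0}^n\epsilon_i b_i=0\}$ depends only on the multiset $\{b_0,\dots,b_n\}$, because the defining equation is symmetric under any permutation of the indices $0,1,\dots,n$. Since $Y_n$ is obtained from $X_n$ by transposing the entries in positions $0$ and $k$, the two sequences carry the same multiset, so $N(Y_n)=N(X_n)$ and hence $T(X_n)=\tfrac12 N(X_n)=\tfrac12 N(Y_n)=T(Y_n)$.

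I expect the one step needing genuine care to be the identity $N=2T$, i.e.\ checking that the sign-flip involution really does pair up the $\epsilon_0=+1$ and $\epsilon_0=-1$ solutions and that the $\epsilon_0=-1$ block is counted exactly once by $T$ (in particular this must remain valid when $b_0=0$, where the two signs of the target give the same term, but there the involution is still fixed-point free, so the pairing survives); everything after that is bookkeeping on multisets. As a shortcut I would also record the alternative one-line argument: the integral formula $T(X_n)=\frac{2^n}{\pi}\int_0^\pi\prod_{i=0}^n\cos(b_i x)\,dx$ from Section~1.2 has an integrand that is visibly invariant under permuting the $b_i$, so swapping $b_0$ and $b_k$ leaves it unchanged and $T(X_n)=T(Y_n)$ drops out at once — though this presupposes the integral identity, which the homogenization argument does not. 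If a constructive proof is preferred, I could instead exhibit the explicit bijection $\delta_k=\epsilon_k$, $\delta_i=-\epsilon_k\epsilon_i$ for $i\ne k$, which realizes the symmetry of $N$ directly.
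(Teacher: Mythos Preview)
Your proof is correct. The core idea matches the paper's: both arguments ``homogenize'' by putting a free sign on the target term and then observe that the resulting count is symmetric in all the $b_i$. The paper does this by direct manipulation---isolating $b_k$, splitting into the two cases $+b_k$ and $-b_k$, and reassembling into the equation $b_k=\pm b_0\pm b_1\pm\cdots\pm b_{k-1}\pm b_{k+1}\pm\cdots\pm b_n$---which is exactly your $N=2T$ identity carried out for the specific transposition $(0\;k)$. Your packaging via the auxiliary count $N(X_n)=\#\{\epsilon\in\{\pm1\}^{n+1}:\sum_i\epsilon_i b_i=0\}$ is a bit cleaner: it yields invariance under \emph{all} permutations at once rather than one transposition at a time, and it makes the role of the global sign-flip involution explicit (so the $b_0=0$ edge case is transparently handled). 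Your two side remarks---deriving the result from the integral formula, and the explicit bijection $\delta_k=\epsilon_k$, $\delta_i=-\epsilon_k\epsilon_i$---are also valid; note that the integral shortcut is circular in the paper's logical order since Theorem~5 is proved later, so your homogenization argument is the right primary proof here.
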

\begin{proof}
From the definition, $T(X_n)$ is the number of possible sign choices $\pm$ that satisfy the equation $\pm b_1 \pm b_2 \pm \ldots \pm b_n = b_0$. Which means $T(X_n)$ is the number of possible sign choices $\pm$ that satisfy the equation $b_k = b_0 \pm b_1 \pm b_2 \pm \ldots \pm b_{k-1} \pm b_{k+1} \pm \ldots \pm b_n$ or $-b_k = b_0 \pm b_1 \pm b_2 \pm \ldots \pm b_{k-1} \pm b_{k+1} \pm \ldots \pm b_n$. Which implies $T(X_n)$ is also the number of possible sign choices $\pm$ that satisfy the equation $b_k = \pm b_1 \pm b_2 \pm \ldots \pm b_{k-1} \pm b_0 \pm b_{k+1} \pm \ldots \pm b_n$
\end{proof}
\begin{lemma}
For all $X_n$
$$\int_{0}^{\pi}\sum_{\pm}\cos\left(\left(-b_0\pm b_1\pm...\pm b_n\right)x\right)\,dx=T(X_n)\pi$$
where the summation sums all $2^n$ possible $\cos\left(\left(-b_0\pm b_1\pm...\pm b_n\right)x\right)$
\end{lemma}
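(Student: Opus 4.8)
The plan is to exploit the elementary fact that integrating a single cosine of an integer multiple of $x$ over $[0,\pi]$ acts as a detector for whether that integer is zero. First I would use the linearity of the integral to interchange the finite sum over the $2^n$ sign choices with the integral, writing
$$\int_{0}^{\pi}\sum_{\pm}\cos\left(\left(-b_0\pm b_1\pm\ldots\pm b_n\right)x\right)dx=\sum_{\pm}\int_{0}^{\pi}\cos\left(\left(-b_0\pm b_1\pm\ldots\pm b_n\right)x\right)dx,$$
so that it suffices to understand each summand separately.

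Next, for a fixed choice of signs, set $S=-b_0\pm b_1\pm\ldots\pm b_n$, which is an integer because every $b_i\in\mathbb{Z}$. When $S\neq 0$ we have $\int_{0}^{\pi}\cos(Sx)\,dx=\frac{\sin(S\pi)}{S}$, and since $S$ is a nonzero integer this vanishes; when $S=0$ the integrand is identically $1$ and the integral equals $\pi$. Thus each summand contributes exactly $\pi$ when its sign choice yields $-b_0\pm b_1\pm\ldots\pm b_n=0$, and $0$ otherwise.

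The final step is purely combinatorial: the condition $-b_0\pm b_1\pm\ldots\pm b_n=0$ is equivalent to $\pm b_1\pm\ldots\pm b_n=b_0$, so by the definition of $T(X_n)$ the number of sign choices producing a nonzero contribution is precisely $T(X_n)$. Summing these contributions gives $T(X_n)\pi$, which is the claimed identity.

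I do not anticipate a genuine obstacle here, since the argument reduces to the orthogonality-type evaluation of $\int_0^\pi\cos(Sx)\,dx$. The only point requiring care is the observation that $S$ is always an integer, which is what guarantees $\sin(S\pi)=0$ and makes the vanishing of the nonzero-frequency terms exact rather than approximate; this is the crux that ties the analytic integral to the discrete count $T(X_n)$.
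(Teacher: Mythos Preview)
Your proposal is correct and follows essentially the same approach as the paper: evaluate $\int_0^\pi\cos(Sx)\,dx$ as $\pi$ when $S=0$ and $0$ otherwise, then identify the number of sign choices giving $S=0$ with $T(X_n)$. The only differences are cosmetic---you make the linearity step and the computation $\sin(S\pi)/S=0$ explicit, whereas the paper simply records the two values of the integral---but the argument is the same.
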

\begin{proof}
\begin{equation*}
\int_{0}^{\pi}\cos{nx}\,dx = \begin{cases}
\pi & \quad n=0 \\
0 & \quad n\neq0
\end{cases}
\end{equation*}
Therefore, $-b_0\pm b_2\pm...\pm b_n=0\iff \int_{0}^{\pi}\cos\left(\left(-b_0\pm b_1\pm...\pm b_n\right)x\right)\,dx=\pi$. Also, since $T(X_n)$ is the number of possible sign choices $\pm$ that satisfy the equation $ \pm b_1 \pm b_2 \pm \ldots \pm b_n = b_0$,
\begin{align*}
&\int_{0}^{\pi}\sum_{\pm}\cos\left(\left(-b_0\pm b_2\pm...\pm b_n\right)x\right)\,dx\\
&=\bigg[\text{the number of ways to choose $\pm$ to make }\pm b_1\pm b_2\pm...\pm b_n=b_0\bigg]\cdot\pi\\
&=T(X_n)\pi
\end{align*}
\end{proof}

\begin{lemma}
For all $X_n$
$$\prod_{i=0}^{n}\cos(b_i x)=
\frac{1}{2^{n}}\sum_{\pm}\cos\left(\left(-b_0\pm b_1\pm...\pm b_n\right)x\right)$$
where the summation sums all $2^n$ possible $\cos\left(\left(-b_0\pm b_1\pm...\pm b_n\right)x\right)$
\end{lemma}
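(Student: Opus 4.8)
The plan is to prove this \emph{generalized Werner's formula} by induction on $n$, iterating the elementary product-to-sum identity
$$\cos A\cos B=\tfrac{1}{2}\bigl(\cos(A-B)+\cos(A+B)\bigr).$$
Before starting, I would record one observation that dissolves an apparent asymmetry: since cosine is even, $\cos\bigl((-b_0\pm b_1\pm\cdots\pm b_n)x\bigr)$ is unchanged under flipping every sign, so fixing the sign of $b_0$ to be negative on the right-hand side is merely a choice of representative and costs nothing.

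For the base case I would take $n=0$, where the claimed identity reads $\cos(b_0x)=\frac{1}{2^0}\cos(-b_0x)$, which holds by evenness; the case $n=1$ is exactly Werner's formula and serves as a sanity check. For the inductive step, assume
$$\prod_{i=0}^{n-1}\cos(b_ix)=\frac{1}{2^{n-1}}\sum_{\pm}\cos\bigl((-b_0\pm b_1\pm\cdots\pm b_{n-1})x\bigr),$$
multiply both sides by $\cos(b_nx)$, and push the extra factor inside the sum. Applying the product-to-sum identity to each summand with $A=(-b_0\pm b_1\pm\cdots\pm b_{n-1})x$ and $B=b_nx$ splits that summand into two, one carrying $-b_n$ and one carrying $+b_n$, while contributing a factor $\tfrac{1}{2}$. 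This simultaneously upgrades the prefactor $\frac{1}{2^{n-1}}$ to $\frac{1}{2^{n}}$ and, since each of the $2^{n-1}$ old sign patterns on $b_1,\dots,b_{n-1}$ is completed by both choices of sign on $b_n$, produces exactly the $2^{n}$ sign patterns on $b_1,\dots,b_n$ with no repetition and no omission. That is the desired identity at level $n$.

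As an independent check I would also run the computation in complex-exponential form: writing $\cos(b_ix)=\tfrac{1}{2}\bigl(e^{ib_ix}+e^{-ib_ix}\bigr)$ and expanding $\prod_{i=0}^{n}\cos(b_ix)$ yields $\frac{1}{2^{n+1}}$ times a sum of $2^{n+1}$ exponentials $e^{i(\epsilon_0b_0+\cdots+\epsilon_nb_n)x}$ over all sign vectors $(\epsilon_0,\dots,\epsilon_n)$; grouping each vector with its negative collapses the $2^{n+1}$ exponentials into $2^{n}$ conjugate pairs, each pair summing to twice a cosine, and selecting $\epsilon_0=-1$ as the representative of each pair reproduces the stated sum with prefactor $\frac{1}{2^{n}}$.

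The only place demanding real care — the main obstacle, though it is combinatorial rather than analytic — is the counting bookkeeping: in the inductive step one must verify that the splitting is a genuine bijection between the $2^{n}$ new terms and the $2^{n}$ sign assignments to $(b_1,\dots,b_n)$, and in the exponential approach one must confirm that fixing $\epsilon_0$ selects exactly one representative from each conjugate pair. Both reduce to the single observation that appending an independent $\pm$ choice doubles a set of sign patterns, so I expect this step to be short once stated cleanly.
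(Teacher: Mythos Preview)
Your proposal is correct and follows essentially the same route as the paper: induction on $n$ with the base case $n=0$ and the inductive step carried out by multiplying by $\cos(b_{n+1}x)$ and applying the product-to-sum identity termwise. Your write-up is in fact more careful than the paper's about the bijection between split summands and sign patterns, and the complex-exponential cross-check is a nice supplementary verification that the paper does not include.
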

\begin{proof}
When n=0, the equality holds.\\
When $n\geq0$, suppose
$$\prod_{i=0}^{n}\cos(b_i x)=
\frac{1}{2^{n}}\sum_{\pm}\cos\left(\left(-b_0\pm b_1\pm...\pm b_n\right)x\right)$$
Then,
\begin{align*}
&\prod_{i=0}^{n+1}\cos(b_i x)
=\cos(b_{n+1} x)\prod_{i=0}^{n}\cos(b_i x)=\frac{1}{2^{n}}\sum_{\pm}\cos\left(\left(-b_0\pm b_1\pm...\pm b_n\right)x\right)\cos(b_{n+1} x)\\
&=\frac{1}{2^{n+1}}\sum_{\pm}\bigg\{\cos\big(\left(-b_0\pm b_1\pm...\pm b_n\right)x-b_{n+1}x\big)+\cos\big(\left(-b_0\pm b_1\pm...\pm b_n\right)x+b_{n+1}x\big)\bigg\}\\
&=\frac{1}{2^{n+1}}\sum_{\pm}\cos\left(\left(-b_0\pm b_1\pm...\pm b_{n+1}\right)x\right)
\end{align*}
Therefore, equality holds for all $X_n$.
\end{proof}

\begin{lemma}
"Generalized werner's formula" For all $X_n$ and $-1\leq m\in\mathbb{Z}$
\begin{equation*}
\prod_{i=0}^{m}\sin(b_i x)\prod_{i=m+1}^{n}\cos(b_i x)=\frac{(-1)^{\lfloor{\frac{m+1}{2}}\rfloor}}{2^n}
\begin{cases}
\sum_{e\in S}\left(\cos\left(\left(b_0+e_1 b_1+...+ e_n b_n\right)x\right)\prod_{j=1}^{m}e_j\right) & \quad m\text{ is odd}\\
\sum_{e\in S}\left(\sin\left(\left(b_0+e_1 b_1+...+ e_n b_n\right)x\right)\prod_{j=1}^{m}e_j\right) & \quad m\text{ is even}
\end{cases}
\end{equation*}
where the summation will sum all $2^{n}$ possible $\cos\left(\left(b_0\pm b_1\pm...\pm b_n\right)x\right)$
\end{lemma}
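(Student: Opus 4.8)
The plan is to prove the identity by induction on $n$, peeling off the last factor $b_n$ and invoking the product-to-sum (Werner) identities, with the all-cosine case already handled in Lemma 3 (namely $m=-1$) as the guiding special case. Throughout I write $e=(e_1,\dots,e_n)$ for a sign vector in $S=\{\pm1\}^n$, set $\phi(e)=b_0+e_1b_1+\dots+e_nb_n$, and let $\sigma_m(e)=\prod_{j=1}^{m}e_j$ be the leading sign product, so the claim reads
\[
\prod_{i=0}^{m}\sin(b_ix)\prod_{i=m+1}^{n}\cos(b_ix)=\frac{(-1)^{\lfloor(m+1)/2\rfloor}}{2^{n}}\sum_{e\in S}\sigma_m(e)\,C_m(\phi(e)x),
\]
where $C_m=\cos$ if $m$ is odd and $C_m=\sin$ if $m$ is even. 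The base case $n=0$ is immediate: the left side is $\cos(b_0x)$ when $m=-1$ and $\sin(b_0x)$ when $m=0$, the floor-sign is $+1$ in both, and the single summand on the right reproduces the left.

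For the inductive step I fix $n$, assume the statement for $n-1$ at every admissible index, and split on whether the last factor $b_n$ enters as a cosine or a sine. If $m<n$ the factor is $\cos(b_nx)$: applying the hypothesis to $\prod_{i=0}^{m}\sin\prod_{i=m+1}^{n-1}\cos$ and multiplying by $\cos(b_nx)$, I expand each summand with $\cos\alpha\cos\beta=\tfrac12[\cos(\alpha+\beta)+\cos(\alpha-\beta)]$ (cosine form of the hypothesis) or $\sin\alpha\cos\beta=\tfrac12[\sin(\alpha+\beta)+\sin(\alpha-\beta)]$ (sine form). Each identity splits one term into the two terms $e_n=\pm1$, both with coefficient $+\tfrac12$ and no sign attached to $e_n$; since $n>m$ the factor $\sigma_m(e)$ is untouched. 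The net effect is to turn $2^{n-1}$ into $2^{n}$ and double the number of summands while leaving $m$, and hence the floor-sign, unchanged --- exactly the target at level $n$.

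The interesting case is $m=n$, where the last factor is $\sin(b_nx)$ and I apply the hypothesis to $\prod_{i=0}^{n-1}\sin$, i.e.\ at index $m'=m-1$, then multiply by $\sin(b_nx)$ using $\sin\alpha\sin\beta=\tfrac12[\cos(\alpha-\beta)-\cos(\alpha+\beta)]$ when $m$ is odd, or $\cos\alpha\sin\beta=\tfrac12[\sin(\alpha+\beta)-\sin(\alpha-\beta)]$ when $m$ is even. Now the two resulting terms carry opposite signs, and reading them off as $e_n=\pm1$ shows the split contributes exactly the factor $e_n$, promoting $\sigma_{m-1}(e)$ to $\sigma_{m}(e)=e_n\sigma_{m-1}(e)$. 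This is the structural reason a sine factor feeds the sign product whereas a cosine factor does not, and it simultaneously flips $\cos$ and $\sin$ in the summand, matching the parity switch of $C_m$.

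The step I expect to be the main obstacle is the sign bookkeeping in this last case: I must confirm that the prefactor advances correctly from $(-1)^{\lfloor m/2\rfloor}$ at index $m'=m-1$ to $(-1)^{\lfloor(m+1)/2\rfloor}$ at index $m$. This reduces to the two elementary identities $\lfloor m/2\rfloor+1=\lfloor(m+1)/2\rfloor$ for $m$ odd --- where the extra minus sign produced by $\sin\alpha\sin\beta$ is precisely absorbed by the floor advancing --- and $\lfloor m/2\rfloor=\lfloor(m+1)/2\rfloor$ for $m$ even, where $\cos\alpha\sin\beta$ supplies only the $e_n$ and no net sign. Once these are checked the four sub-cases close the induction. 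As a cleaner but less elementary alternative I could instead expand every factor via $\cos(b_ix)=\tfrac12(e^{ib_ix}+e^{-ib_ix})$ and $\sin(b_ix)=\tfrac1{2i}(e^{ib_ix}-e^{-ib_ix})$, collect the $2^{n+1}$ exponentials, and pair each $e$ with $-e$ to fix the sign of $b_0$; the overall prefactor $i^{-(m+1)}$ then yields both the $\cos/\sin$ dichotomy and the floor-sign in a single stroke.
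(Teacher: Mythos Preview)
Your inductive proof via the product-to-sum identities is correct, and the sign bookkeeping in the $m=n$ case checks out exactly as you describe: the extra minus from $\sin\alpha\sin\beta$ is absorbed by the floor advancing when $m$ is odd, while $\cos\alpha\sin\beta$ contributes only the factor $e_n$ when $m$ is even.

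The paper, however, takes a quite different and much shorter route. It starts from the all-cosine identity of Lemma~3, treats the $b_i$ as continuous real parameters, and applies the operator $\prod_{i=0}^{m}\bigl(-\partial/\partial b_i\bigr)$ to both sides. On the left each $-\partial/\partial b_i$ converts $\cos(b_i x)$ into $x\sin(b_i x)$; on the right the chain rule produces the sign factors $e_j$ (for $1\le j\le m$) and cycles the outer function through $\cos\to\sin\to-\cos\to-\sin\to\cdots$, so that after $m+1$ differentiations the floor-sign $(-1)^{\lfloor(m+1)/2\rfloor}$ and the $\cos/\sin$ dichotomy drop out simultaneously, with the common $x^{m+1}$ factors cancelling. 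Your argument is more elementary --- pure trigonometry, no calculus --- and makes completely transparent the mechanism by which each sine factor feeds the sign product $\sigma_m(e)$ while each cosine factor does not; the price is the four-way case split. The paper's differentiation trick is slicker but tacitly relies on extending the identity of Lemma~3 from integer to real $b_i$ before differentiating. Your closing remark about the complex-exponential expansion is arguably the cleanest of the three approaches.
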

\begin{proof}
From Lemma 3, we can say for all $X_n$
$$\prod_{i=0}^{n}\cos(b_i x)=
\frac{1}{2^{n}}\sum_{\pm}\cos\left(\left(b_0\pm b_1\pm...\pm b_n\right)x\right)$$
Considering applying $\prod_{i=0}^{m}\left(-\frac{\partial}{\partial b_i}\right)$ to both side of the equation\\
Then, when $m$ is odd number
$$\prod_{i=0}^{m}\sin(b_i x)\prod_{i=m+1}^{n}\cos(b_i x)=\frac{(-1)^{\lfloor{\frac{m+1}{2}}\rfloor}}{2^n}\sum_{e\in S}\left(\cos\left(\left(b_0+e_1 b_1+...+ e_n b_n\right)x\right)\prod_{j=1}^{m}e_j\right)$$
when $m$ is even number
$$\prod_{i=0}^{m}\sin(b_i x)\prod_{i=m+1}^{n}\cos(b_i x)=\frac{(-1)^{\lfloor{\frac{m+1}{2}}\rfloor}}{2^n}\sum_{e\in S}\left(\sin\left(\left(b_0+e_1 b_1+...+ e_n b_n\right)x\right)\prod_{j=1}^{m}e_j\right)$$
where $S=\{1,-1\}^n$\\
\end{proof}

\begin{theorem}For all $X_n$
$$T(X_n)=\frac{2^{n}}{\pi}\int_{0}^{\pi}\prod_{i=0}^{n}\cos(b_i x)dx$$
\end{theorem}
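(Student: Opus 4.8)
The plan is to obtain the identity by chaining together the two immediately preceding lemmas, so that the argument reduces to a single integration followed by a rearrangement. First I would start from the product-to-sum expansion established in Lemma 3,
$$\prod_{i=0}^{n}\cos(b_i x)=\frac{1}{2^{n}}\sum_{\pm}\cos\left(\left(-b_0\pm b_1\pm\ldots\pm b_n\right)x\right),$$
which rewrites the product of cosines as a scaled sum over all $2^n$ sign choices. I would then integrate both sides over $[0,\pi]$. By linearity of the integral the constant $1/2^{n}$ factors out, and the integral of the finite sum is exactly the quantity appearing on the left-hand side of Lemma 2.

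Next I would invoke Lemma 2, namely
$$\int_{0}^{\pi}\sum_{\pm}\cos\left(\left(-b_0\pm b_1\pm\ldots\pm b_n\right)x\right)\,dx=T(X_n)\pi.$$
Substituting this into the integrated identity gives
$$\int_{0}^{\pi}\prod_{i=0}^{n}\cos(b_i x)\,dx=\frac{1}{2^{n}}\,T(X_n)\,\pi,$$
and multiplying through by $2^{n}/\pi$ yields the claimed formula $T(X_n)=\dfrac{2^{n}}{\pi}\displaystyle\int_{0}^{\pi}\prod_{i=0}^{n}\cos(b_i x)\,dx$.

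Since every step is an equality between already-proved statements, there is essentially no hard obstacle here; the proof is a two-line deduction. The only point requiring a moment of care is a cosmetic sign-convention mismatch: Lemma 2 is phrased with leading term $-b_0$, whereas the statement of Lemma 3 is written with $+b_0$. Because cosine is even and the summation ranges over every combination of $\pm$ on $b_1,\ldots,b_n$, the two sums are term-for-term identical, so this discrepancy is harmless and does not affect the computation.
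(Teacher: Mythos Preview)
Your proposal is correct and mirrors the paper's own proof exactly: apply Lemma~3 to expand the product as a sum of cosines, integrate over $[0,\pi]$, and then invoke Lemma~2 to evaluate the integral as $T(X_n)\pi/2^n$. Your remark about the harmless $\pm b_0$ sign discrepancy is a nice clarification that the paper leaves implicit.
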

\begin{proof}
From Lemma 2, for all $X_n$
\begin{align*}
&\prod_{i=0}^{n}\cos(b_i x)=\frac{1}{2^{n}}\sum_{\pm}\cos\left(\left(-b_0\pm b_1\pm...\pm b_{n}\right)x\right)\\
\implies&\int_{0}^{\pi}\prod_{i=0}^{n}\cos(b_i x)dx=\frac{1}{2^{n}}\int_{0}^{\pi}\sum_{\pm}\cos\left(\left(-b_0\pm b_1\pm...\pm b_{n}\right)x\right)dx\\
\implies&\int_{0}^{\pi}\prod_{i=0}^{n}\cos(b_i x)dx=\frac{1}{2^{n}}T(X_n)\pi\quad\text{($\because$ Lemma 2)}
\end{align*}
$$\therefore T(X_n)=\frac{2^n}{\pi}\int_{0}^{\pi}\prod_{i=0}^{n}\cos(b_i x)dx$$
\end{proof}

\begin{theorem}For all $X_n$, when $p+q\equiv 0\pmod{2},\quad p,q\in\mathbb{Z}$
\begin{equation*}
\int_{-q\pi}^{p\pi}\prod_{i=0}^{m}\sin(b_i x)\prod_{i=m+1}^{n}\cos(b_i x)\,dx=
\begin{cases}
0 & m\equiv 0\pmod{4}\\
\displaystyle-\frac{1}{2^n}(p+q)\bigg(2T_{e,m}(X_n)-T(X_n)\bigg) & m\equiv 1\pmod{4}\\
0 & m\equiv 2\pmod{4}\\
\displaystyle\frac{1}{2^n}(p+q)\bigg(2T_{e,m}(X_n)-T(X_n)\bigg) & m\equiv 3\pmod{4}\\
\end{cases}
\end{equation*}
\end{theorem}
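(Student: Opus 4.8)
The plan is to expand the integrand with the Generalized Werner's formula (Lemma 4), integrate the resulting trigonometric sum term by term over $[-q\pi,p\pi]$, and then determine which terms survive. For a sign vector $e\in S=\{1,-1\}^n$ write $k_e=b_0+e_1b_1+\cdots+e_nb_n$. Lemma 4 expresses the integrand as $\frac{(-1)^{\lfloor(m+1)/2\rfloor}}{2^n}$ times $\sum_{e\in S}\cos(k_ex)\prod_{j=1}^m e_j$ when $m$ is odd, and the same with $\sin(k_ex)$ when $m$ is even. Since every $b_i\in\mathbb{Z}$, each frequency $k_e$ is an integer, so the only antiderivatives I need are those of $\cos(kx)$ and $\sin(kx)$ for $k\in\mathbb{Z}$ evaluated at the endpoints $p\pi$ and $-q\pi$.

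First I would dispose of the even-$m$ cases. For $k=0$ the sine term vanishes identically, and for $k\neq 0$ one has $\int_{-q\pi}^{p\pi}\sin(kx)\,dx=-\frac{1}{k}\big(\cos(kp\pi)-\cos(kq\pi)\big)$. The hypothesis $p+q\equiv 0\pmod 2$ forces $p\equiv q\pmod 2$, hence $kp\equiv kq\pmod 2$ and $\cos(kp\pi)=\cos(kq\pi)$, so every term is zero. This gives the value $0$ for both $m\equiv 0$ and $m\equiv 2\pmod 4$.

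For odd $m$ the surviving contributions come from the cosine terms. When $k\neq 0$, $\int_{-q\pi}^{p\pi}\cos(kx)\,dx=\frac{1}{k}\big(\sin(kp\pi)+\sin(kq\pi)\big)=0$ because $k,p,q\in\mathbb{Z}$; thus only the frequencies $k_e=0$ remain, each contributing $\int_{-q\pi}^{p\pi}1\,dx=(p+q)\pi$. The integral therefore collapses to $\frac{(-1)^{\lfloor(m+1)/2\rfloor}}{2^n}(p+q)\pi\,\Sigma$, where $\Sigma=\sum_{e:\,k_e=0}\prod_{j=1}^m e_j$, and it remains to evaluate $\Sigma$ and fix the leading sign.

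Establishing $\Sigma=2T_{e,m}(X_n)-T(X_n)$ is the crux, and I expect it to be the main obstacle. The condition $k_e=0$ reads $\sum_i e_ib_i=-b_0$, so I would apply the sign-flip bijection $e\mapsto -e$: it carries these solutions onto the sign choices counted by $T(X_n)$ (those with $\sum_i e_ib_i=b_0$) and replaces the number $c$ of positive signs among $b_1,\dots,b_m$ by $m-c$. Writing $\prod_{j=1}^m e_j=(-1)^{m-c}$, for odd $m$ this equals $-1$ precisely when $c$ is even and $+1$ when $c$ is odd. Sorting the $k_e=0$ solutions by the parity of $c$ and transporting them through the bijection identifies the even-$c$ class with $T(X_n)-T_{e,m}(X_n)$ and the odd-$c$ class with $T_{e,m}(X_n)$, so $\Sigma=-(T-T_{e,m})+T_{e,m}=2T_{e,m}(X_n)-T(X_n)$. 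Finally $(-1)^{\lfloor(m+1)/2\rfloor}=-1$ when $m\equiv 1\pmod 4$ and $=+1$ when $m\equiv 3\pmod 4$, which distributes the two signed expressions in the statement (carrying the factor $(p+q)\pi$ supplied by the single surviving integral). The delicate point throughout is keeping the three bookkeeping objects — the product $\prod e_j$, the positive-sign parity $c$, and the effect of the sign flip — mutually consistent for odd $m$.
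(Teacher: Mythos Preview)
Your proposal is correct and follows essentially the same route as the paper: apply Lemma~4, integrate term by term over $[-q\pi,p\pi]$, kill the sine case via $p\equiv q\pmod 2$, and in the cosine case retain only the zero-frequency terms and identify their signed count with $2T_{e,m}(X_n)-T(X_n)$. The only cosmetic difference is bookkeeping at that last step: the paper first substitutes $b_0\mapsto -b_0$ in Lemma~4 so that the surviving condition is literally $\sum e_ib_i=b_0$ and then adds and subtracts the unweighted sum $\sum_e\cos(\cdot)$, whereas you keep Lemma~4 as stated and pass through the sign-flip bijection $e\mapsto -e$; both devices produce the same identity $\Sigma=2T_{e,m}-T$.
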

\begin{proof}
From Lemma 4, substituting $b_0=-b_0$,
$$\int_{-q\pi}^{p\pi}\prod_{i=0}^{m}\sin(b_i x)\prod_{i=m+1}^{n}\cos(b_i x)\, dx=$$
\begin{equation*}
\frac{(-1)^{\lfloor{\frac{m+1}{2}}\rfloor}}{2^n}
\begin{cases}
\int_{-q\pi}^{p\pi}-\sum_{e\in S}\left(\cos\left(\left(-b_0+e_1 b_1+...+ e_n b_n\right)x\right)\prod_{j=1}^{m}e_j\right)\, dx & \quad m\text{ is odd}\\
\int_{-q\pi}^{p\pi}-\sum_{e\in S}\left(\sin\left(\left(-b_0+e_1 b_1+...+ e_n b_n\right)x\right)\prod_{j=1}^{m}e_j\right)\, dx & \quad m\text{ is even}
\end{cases}
\end{equation*}
Considering the case when m is odd
\begin{equation*}
\int_{-q\pi}^{p\pi}\cos{nx}\,dx = \begin{cases}
(p+q)\pi & \quad n=0 \\
0 & \quad n\neq0
\end{cases}
\end{equation*}
Therefore, similar to lemma 2,
\begin{align*}
&\int_{-q\pi}^{p\pi}-\sum_{e\in S}\left(\cos\left(\left(-b_0+e_1 b_1+...+ e_n b_n\right)x\right)\prod_{j=1}^{m}e_j\right)\, dx\\
&=\int_{-q\pi}^{p\pi}-\sum_{e\in S}\left(\cos\left(\left(-b_0+e_1 b_1+...+ e_n b_n\right)x\right)\prod_{j=1}^{m}e_j\right)+\sum_{\pm}\cos\left(\left(-b_0\pm b_2\pm...\pm b_n\right)x\right)\, dx-(p+q)T(X_n)\pi\\
&=(p+q)\bigg[\text{two times the number of ways to choose $(e_1,\dots,e_n)$ to make } e_1 b_1+...+ e_n b_n=b_0,\, \prod_{j=1}^{m}e_j=-1\bigg]\pi\\
&\quad -(p+q)T(X_n)\pi\\
&=(p+q)\cdot 2T_{e,m}(X_n)\cdot\pi -(p+q)T(X_n)\pi
\end{align*}
$$\therefore\int_{-q\pi}^{p\pi}\prod_{i=0}^{m}\sin(b_i x)\prod_{i=m+1}^{n}\cos(b_i x)\, dx=\frac{(-1)^{\lfloor{\frac{m+1}{2}}\rfloor}}{2^n}(p+q)\bigg(2T_{e,m}(X_n)-T(X_n)\bigg)$$\\
Considering the case when m is even
\begin{equation*}
\int_{-q\pi}^{p\pi}\sin{nx}\,dx =\frac{1}{n}(\cos{nq\pi}-\cos{np\pi})
\end{equation*}
When $p+q\equiv 0 \pmod{2}$, $(p,q)=(2k,2l),(2k+1,2l+1)$, where $k,l\in\mathbb{Z}$\\
For both cases, $\cos{nq\pi}-\cos{np\pi}=0$, so $$\int_{-q\pi}^{p\pi}\sin{nx}\,dx = 0$$
$$\therefore\int_{-q\pi}^{p\pi}\prod_{i=0}^{m}\sin(b_i x)\prod_{i=m+1}^{n}\cos(b_i x)\, dx=0$$
\end{proof}

\end{document}